\newtheorem{theorem}{Theorem}[section]
\newtheorem{lemma}[theorem]{Lemma}
\theoremstyle{definition}
\newtheorem{remark}[theorem]{Remark}
\theoremstyle{approach}
\numberwithin{equation}{section}
\begin{document}
	\setcounter{page}{1}
	\title{The Bochner-Schoenberg-Eberlein Property for 
		Fr\'echet C$^*$-algebras and uniform Fr\'echet algebras}
	\author[M. Amiri and  A. Rejali]{M. Amiri and A. Rejali}
	\subjclass[2010]{46A04, 46M40, 46K05, 47L40}
	\keywords{Commutative Fr\'echet algebras, BSE-algebra, Fr\'echet
		C$^*$-algebra, multiplier algebra, uniform Fr\'echet algebra.}

\begin{abstract}
	
Takahasi and Hatori introduced a class of commutative Banach algebras 
which satisfy a Bochner-Schoenberg-Eberlein-type inequality. Baised on their
results we introduced a class of commutative Fr\'echet algebras which satisfy this 
property. We show that Fr\'echet C$^*$-algebras and uniform Fr\'echet algebras
are {\text BSE}-algebras.
\end{abstract}

\maketitle \setcounter{section}{0}

\section{Introduction and preliminaries}

The locally C$^*$-algebras were studied in \cite{9, 21} and with various
 names in \cite{2, 4, 5, 6, 16} and elsewhere. These objects are called
 LMC$^*$-algebras and 
pro-C$^*$-algebras  in \cite{16} and \cite{20, 15}, respectively. If the topology is determined only by
 a countable number of C$^*$-seminorms, then such an algebra is called $\sigma$-C$^*$-algebra 
 or F$^*$-algebras; see \cite{2} and \cite{3}. 
Countable inverse limits of C$^*$-algebras
 were introduced in \cite{3} under the name F$^*$-algebras. 
  Takahasi and Hatori showed that 
 a semisimple type I-{\text BSE}-algebra with a bounded approximate
 identity is isomorphic to a commutative C$^*$-algebra and vice versa;
 see \cite[Theorem 3]{17}.
 Recently in \cite{1}, we studied the notion of {\text BSE}-algebras in the
 class of Fr\'echet algebras, and generalized
 some important results concerning this notion from Banach algebras to the Fr\'echet case.
 In addition, uniform Fr\'echet algebras and uniform Banach algebras were studied in \cite{7}.
 In this paper, we show that each commutative Fr\'echet C$^*$-algebra and 
uniform Fr\'echet algebra are a {\text BSE}-algebra.

Before proceeding to the main results, let us present some basic definitions and frameworks.  
 
A Fr\'echet algebra is a complete topological algebra where 
its topology is generated by a countable family of increasing
submultiplicative seminorms. In this definition, the underlying
Fr\'echet space is supposed to be Hausdorff. 
Many authors studied about Fr\'echet algebras; see for example \cite{ARR4,7,M,Rahnama111,RR0}. 
 Rejali and Alimohammadi \cite{3A} also provided a survey paper of results on the Fr\'echet algebras.
 
Following \cite{3}, a Fr\'echet $*$-algebra 
 is a Fr\'echet algebra with a continuous
 involution. Let $\mathcal A$ be a Fr\'echet $*$-algebra with identity $e$. Then, there exists a 
 sequence $(p_n)_{n\in\mathbb{N}}$ of seminorms for $\mathcal A$ such that
 $p_{n}(e)=1$ and $p_{n}(a^*)=p_{n}(a)$ for all $n\in \mathbb{N}$ and $a\in \mathcal A$. 
 Such a sequence is called a $*$-sequence of seminorms for $\mathcal A$. 
The Fr\'echet $C^{\ast}$-algerbra $(\mathcal A,p_{n})_{n\in \mathbb{N}}$ is a Fr\'echet $*$-algebra
which has the C$^*$-property $p_n(a^{\ast}a)=(p_n(a))^{2}$, for each $a\in\mathcal A$ and $n \in \mathbb{N}$.
In this case, $(p_n)_{n\in\mathbb{N}}$
is called an F$^*$-sequence
of seminorms. The typical structures 
indicate that every Fr\'echet $*$-algebra and Fr\'echet C$^*$-algebra are
inverse limit of Banach $*$-algebras and Banach C$^*$-algebras, respectively; see \cite{14} for details. 

Let $\mathcal A$ be a pro-C$^*$-algebra.
Following \cite{15}, $\mathcal A$ is a complete Hausdorff topological $*$-algebra 
over $\mathbb{C}$ whose topology is determined by its continuous 
 C$^*$-seminorms in the sense that a net $(a_{\lambda})_{\lambda}$ converges to
 zero in $\mathcal{A}$ if and only if $p(a_{\lambda})\longrightarrow_{\lambda} 0$ for every continuous 
 C$^*$-seminorm $p$ on $\mathcal A$. 
Let now $S(\mathcal A)$ denotes the set of all
 continuous C$^*$-seminorms on $\mathcal A$,
directed by the order $p\leq q$ if
 $p(a) \leq q(a)$ for all $a\in \mathcal A$.
For $p\in S(\mathcal A)$, let ${\mathcal A}_p$
denotes the completion of $\mathcal A/ \text{ker}(p)$ in the norm given
 by $p$. Then, ${\mathcal A}_p$ is a C$^*$-algebra; see \cite{15} for more details. 
 Note that there is a canonical surjective map
 ${\mathcal A}_q \rightarrow {\mathcal A}_p$ whenever $p\leq q$.
 A topological $*$-algebra $\mathcal A$ is a pro-C$^*$-algebra
 if and only if it is inverse limit of an inverse 
 system of C$^*$-algebras and $*$-homomorphisms. Indeed, by applying \cite [Proposition 1.2]{15},
$$
 \mathcal A\cong \underset{\underset{p\in S(\mathcal A)}{\longleftarrow}}{lim}{\mathcal A}_{p}.
$$
 For definitions of inverse and direct limits of algebras, we refer the reader to \cite{111}.

Following \cite{2,15}, any C$^*$-algebra 
and especially any Fr\'echet C$^*$-algebra is a pro-C$^*$-algebra.

Let $\mathcal A$ be a commutative semisimple Banach algebra, and let $\Delta(\mathcal A)$ denotes the set of all nonzero multiplicative linear functionals on $\mathcal A$.
For $a\in\mathcal{A}$, define
$$
\widehat{a}:\Delta(\mathcal A)\rightarrow\mathbb{C},\;\;\;\;\widehat{a}(\varphi)=\varphi(a)\;\;\;\;\;\;\;\;(\varphi\in\Delta(\mathcal A))
$$
Let $\widehat{\mathcal A}=\lbrace\widehat{a}: a\in \mathcal A\rbrace$
 and
 $\Phi:\Delta(\mathcal A)\rightarrow \mathbb{C}$ be a continuous
 function such that $\Phi \cdot \widehat{\mathcal A} 
\subseteq \widehat{\mathcal A}$.
Suppose that
$$\mathcal{M}(\mathcal A)=\big{\lbrace} \Phi: \Delta(\mathcal A)\rightarrow
 \mathbb{C}: \text{$\Phi$ is continuous and}\; \Phi \cdot \widehat{\mathcal A}
 \subseteq \widehat{\mathcal A}\big{\rbrace}.$$
Following \cite{13}, 
$\mathcal{M}(\mathcal A)=\widehat{\mathsf{M}(\mathcal A)}$
where $\mathsf{M}(\mathcal A)$ is the set 
of all multipliers on $\mathcal A$, i.e.
$$
\mathsf{M}(\mathcal A)=\big{\{}T:\mathcal
A\rightarrow\mathcal A: \text{for all $a,b\in\mathcal{A}$},\; T(a)b=aT(b)\big{\}}.
$$
We also recall from \cite{17,18} that
a bounded continuous function $\sigma$ on $\Delta(\mathcal A)$ is called
 a {\text BSE}-function if
there exists a positive real number $\beta$ such that the inequality
$$\vert\sum_{i=1}^{n}c_{i}\sigma (\varphi_{i})\vert\leq\beta
\Vert\sum_{i=1}^{n}c_{i}\varphi_{i}\Vert_{\mathcal A^{\ast}}$$
holds, for every
finite number of complex-numbers $c_1,\cdots,c_n$ and the same number of
 $\varphi_1,\cdots,\varphi_n$ in $\Delta(\mathcal A)$. 
 In \cite{1}, we generalized this as follows.
 
 Let $(\mathcal A,p_n)_{n\in\mathbb{N}}$ be a commutative semisimple Fr\'echet algebra. Consider $\mathcal{A}^*$ 
 the topological dual of $\mathcal A$. Following \cite{M}, the strong topology on $\mathcal{A}^*$
 is generated by seminorms $(P_M)$ where $M$ is a bounded set in $\mathcal A$.
 Moreover $\Delta(\mathcal A)$, $\mathsf{M}(\mathcal A)$ and $\mathcal{M}(\mathcal A)$
 are defined similar to the Banach case; see \cite{7,15} for details.
 Following \cite{1}, a bounded complex-valued
continuous function $\sigma$ on $\Delta(\mathcal A)$, is called a
{\text BSE}-function if there exist a bounded set $M$ in $\mathcal A$ and 
 a positive real number $\beta_{M}$ such that for every finite number
 of complex-numbers $c_1,\cdots, c_n$ and the same number of 
 $\varphi_1,\cdots, \varphi_n$ in $\Delta(\mathcal A)$ the inequality
$$\vert\sum_{i=1}^{n} c_{i}\sigma(\varphi_{i})\vert\leq\beta_{M}
P_{M}(\sum_{i=1}^{n}c_{i}\varphi_{i})$$
holds. 
The set of all {\text BSE}-functions is denoted by $C_{\text{BSE}}(\Delta(\mathcal A))$.
We showed that $C_{\text{BSE}}(\Delta(\mathcal A))$ is also a commutative semisimple Fr\'echet algebra.
Following \cite{1} and
by applying similar arguments of \cite[Theorem 1.2.2]{13}, 
for each $T\in \mathsf{M}(\mathcal A)$, there exists a unique continuous and bounded function
$\widehat{T}$ on $\Delta(\mathcal A)$ such that
$$
\varphi(Ta)=\widehat{T}(\varphi)\varphi(a)\;\;\;\;\;\;\;\;\;\;\;\;\big{(}a\in\mathcal A,\;\varphi \in \Delta(\mathcal A)\big{)}.
$$
Set
$\widehat{\mathsf{M}(\mathcal A)}=\lbrace \widehat{T}:T \in \mathsf{M}(\mathcal A)\rbrace$.
If $\widehat{\mathsf{M}(\mathcal A)}=C_{\text{BSE}}(\Delta(\mathcal A))$, then
$\mathcal A$ is called a {\text BSE}-Fr\'echet algebra.

\section{Main results}

Let $(\mathcal{A},p_{n})_{n\in\mathbb{N}}$
be a Fr\'echet C$^*$-algebra. As mentioned in the previous section,
$\mathcal A\cong \underset{\longleftarrow}{lim} {\mathcal A}_n$
where each map ${\mathcal A}_{n+1} 
\rightarrow {\mathcal A}_n$ is surjective. 
The canonical map 
$\mathcal A \rightarrow {\mathcal A}_n$ is also 
surjective; see \cite[Lemma 5.1]{15} and \cite[II.5]{111}.
Now, assume that
$\varphi:\mathcal A \rightarrow \mathcal B$ 
is a surjective homomorphism of Fr\'echet C$^*$-algebras. If $\mathcal A$ has 
a countable approximate identity, then we can define a surjective map 
$\underset{\longleftarrow}{lim} \mathsf{M}({\mathcal A}_n)\rightarrow\underset{\longleftarrow}{lim} \mathsf{M}({\mathcal B}_n)$ and so a surjective map $\mathsf{M}(\mathcal A) \rightarrow \mathsf{M}(\mathcal B)$; see \cite [Theorem 5.11]{15} for more details.
We also recall from \cite[proposition 5.9]{15} that
the multiplier algebra of Fr\'echet C$^*$-algebras is a 
 Fr\'echet C$^*$-algebra. In fact, $\mathsf{M}(\underset{\longleftarrow}
{lim}{\mathcal A}_n)\cong \underset {\longleftarrow}{lim}\mathsf{M}({\mathcal A}_n).$ 
Moreover, as we pointed out earlier, the map ${\mathcal A}_{n+1} \rightarrow {\mathcal A}_n$ is assumed surjective. However,
$\mathsf{M}({\mathcal A}_{n+1})\rightarrow \mathsf{M}({\mathcal A}_n)$ 
need not be surjective. 

\begin{remark}\label{2}
Let $(\mathcal A,p_n)_{n\in\mathbb{N}}$
be a commutative semisimple unital Fr\'echet C$^{\ast}$-algebra.
Following \cite[page 171]{15} and also \cite[Theorem 3.14]{15}, there exists a sequence of Banach $C^*$-algebras $({\mathcal A}_n)$,
 such that
\begin{enumerate}
\item[(i)] 
$\mathcal A\cong\underset{\longleftarrow}{lim}{\mathcal A}_n;$
\item[(ii)] 
$\Delta(\mathcal A)\cong\underset{\longrightarrow}{lim}
\Delta({\mathcal A}_n);$ 
\item[(iii)] 
$\mathcal{M}(\mathcal A)\cong
\underset{\longleftarrow}{lim}{\mathcal{M}(\mathcal A_{n})}.$ 
\end{enumerate}
\end{remark}

\begin{lemma} \label{1}
Let $(\mathcal A,p_n)_{n\in\mathbb{N}}$ be a commutative semisimple unital Fr\'echet $C^*$- 
algebra. Then, there exists a sequence of Banach C$^{\ast}$-algebras $({\mathcal A}_n)$,
 such that 
$$
C_{\text{BSE}}(\Delta(\mathcal A))\cong
\underset{\longleftarrow}{lim}C_{\text{BSE}}(\Delta(\mathcal A_{n})).$$ 
\end{lemma}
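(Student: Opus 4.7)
The plan is to realize the isomorphism concretely as restriction of functions along the direct-limit structure of Remark \ref{2}(ii). Writing $\pi_n : \mathcal A \to \mathcal A_n$ for the canonical surjections and $\iota_n : \Delta(\mathcal A_n) \hookrightarrow \Delta(\mathcal A)$, $\varphi \mapsto \varphi \circ \pi_n$, for the dual inclusions, Remark \ref{2}(ii) identifies $\Delta(\mathcal A)$ with $\bigcup_n \iota_n(\Delta(\mathcal A_n))$ as a topological direct limit. The candidate map will be $\Psi(\sigma) = (\sigma \circ \iota_n)_n$.

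The first step is to verify that $\sigma \circ \iota_n$ is a Banach BSE function for each $n$: if the Fr\'echet BSE inequality for $\sigma$ uses a bounded set $M \subseteq \mathcal A$ and constant $\beta_M$, then $\pi_n(M)$ is bounded in the Banach C$^*$-algebra $\mathcal A_n$, and unwinding the definition of $P_M$ shows that $P_M\big(\sum c_i \iota_n(\varphi_i)\big)$ is dominated by a multiple of the dual norm $\|\sum c_i \varphi_i\|_{\mathcal A_n^{\ast}}$, yielding the required Banach BSE estimate for $\sigma \circ \iota_n$. Compatibility with the inverse-system restriction maps is immediate from $\iota_n = \iota_{n+1} \circ \iota_{n,n+1}$, so $\Psi$ lands in the inverse limit, and injectivity is clear from $\Delta(\mathcal A) = \bigcup_n \iota_n(\Delta(\mathcal A_n))$.

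The substantive work is surjectivity, which is where I expect the main obstacle. Given a compatible family $(\sigma_n)$, the rule $\sigma \circ \iota_n := \sigma_n$ defines a continuous function on $\Delta(\mathcal A)$, but one must produce a \emph{single} bounded $M \subseteq \mathcal A$ and a single constant $\beta_M$ making $\sigma$ a Fr\'echet BSE function; this is nontrivial because bounded sets in $\mathcal A$ must be controlled in every seminorm $p_m$ at once, so naive preimages of bounded sets from the individual layers $\mathcal A_n$ will generally not be bounded in $\mathcal A$. For any finite collection of characters in $\Delta(\mathcal A)$ the direct-limit structure places them all inside some $\iota_n(\Delta(\mathcal A_n))$, where the Banach BSE inequality for $\sigma_n$ applies. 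The key is that each $\pi_{n+1,n} : \mathcal A_{n+1} \to \mathcal A_n$ is a surjective C$^*$-morphism, hence a Banach quotient, so its adjoint is an isometric embedding of $\mathcal A_n^{\ast}$ into $\mathcal A_{n+1}^{\ast}$; this forces the natural Banach BSE constants along the inverse system to be monotone in $n$. Combined with the inverse-limit identification $\mathcal A \cong \underset{\longleftarrow}{lim}\,\mathcal A_n$, this monotonicity should let us lift the multipliers representing each $\sigma_n$ (via the surjection $\mathsf M(\mathcal A) \to \mathsf M(\mathcal A_n)$ noted in the preceding discussion) to a single element of $\mathcal A$, whose singleton then serves as the required bounded set $M$.
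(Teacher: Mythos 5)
Your overall route is the same as the paper's: identify $\Delta(\mathcal A)$ with the direct limit $\bigcup_n\Delta(\mathcal A_n)$ and realize the isomorphism by restricting BSE-functions along the inclusions $\iota_n$. The forward direction of your argument is sound, and in one respect more careful than the paper's own proof: the paper verifies that the connecting maps $\pi_{mn}\colon C_{\text{BSE}}(\Delta(\mathcal A_m))\to C_{\text{BSE}}(\Delta(\mathcal A_n))$ are well defined, but never checks that $\sigma\circ\iota_n$ is a Banach BSE-function when $\sigma$ only satisfies the Fr\'echet inequality relative to a bounded set $M$; your estimate $P_M\big(\sum_i c_i\,\varphi_i\circ\rho_n\big)\le\big(\sup_{a\in M}p_n(a)\big)\,\big\Vert\sum_i c_i\varphi_i\big\Vert_{\mathcal A_n^{*}}$ supplies exactly this.

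The gap is in surjectivity, which you rightly single out as the crux (the paper simply defines the glued function through the universal property and never verifies that it lies in $C_{\text{BSE}}(\Delta(\mathcal A))$), but neither of your two mechanisms closes it as written. First, the monotonicity you extract from the isometry of $\rho_{mn}^{*}$ points the wrong way: it yields $\Vert\sigma_n\Vert_{\text{BSE}}\le\Vert\sigma_m\Vert_{\text{BSE}}$ for $n\le m$, so the constants increase along the system and need not be bounded; a single $\beta$ valid at every level is in general unattainable, and the growth has to be absorbed into the bounded set $M$ inside $P_M$, not into the constant. Second, the multiplier-lifting idea is the one that actually works, but you leave it at ``should let us'': since $\mathcal A$, and hence each $\mathcal A_n$, is unital, one has $\mathsf M(\mathcal A_n)=\mathcal A_n$, and Takahasi--Hatori gives $C_{\text{BSE}}(\Delta(\mathcal A_n))=\widehat{\mathcal A_n}$; by semisimplicity a compatible family $(\sigma_n)$ then corresponds to a compatible family $(a_n)\in\underset{\longleftarrow}{lim}\,\mathcal A_n\cong\mathcal A$, i.e.\ to a single $a\in\mathcal A$, and $M=\{a\}$ with $\beta_M=1$ gives the Fr\'echet BSE inequality for $\widehat a$ with no uniform-constant issue at all. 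You should write this out explicitly; note also that you must still check that the glued function is \emph{bounded} on $\Delta(\mathcal A)$, which is part of the paper's definition of a BSE-function and does not follow from the inequality with $M=\{a\}$ (indeed $\sup_n p_n(a)$ may be infinite), a point that your sketch, like the paper's proof, leaves unaddressed.
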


\begin{proof}
For each $n\in\mathbb{N}$, let
$I_n=\lbrace a\in \mathcal A:p_n(a)=0\rbrace.$
 Following \cite[page 77]{7}, since $(p_n)_{n}$ is increasing, for each $m,n\in\mathbb{N}$ with $n \leq m$,
we have $I_m\subseteq I_n$ and also
$$\big{\Vert} a+I_n \big{\Vert}\leq \big{\Vert} a+I_m \big{\Vert}\;\;\;\;\;\;\;\;\;(a\in\mathcal{A}).$$
Let $m,n\in\mathbb{N}$ with $n \leq m$.
We consider the map 
$$\rho_{mn}:\mathcal A_m \rightarrow \mathcal A_n,\;\;\;\;
 a+I_m \mapsto a+I_n.$$ 
Then, 
$$\big{\Vert} \rho_{mn}(a+I_m) \big{\Vert}=
 \big{\Vert}a+I_n\big{\Vert}\leq \big{\Vert} a+I_m\big{\Vert},$$
and consequently, $\Vert \rho_{mn} \Vert \leq 1$. 
We also consider the adjoint mapping
$$\rho_{mn}^*:\mathcal A_n^*\rightarrow \mathcal A_m^*,\;\;\;\; f \mapsto
 \overline{f}$$ 
 where $\overline{f}=f\circ\rho_{mn}$.
 Therefore,
 $\big{\Vert} \overline{f}\big{\Vert}_{\mathcal A_m^*} \leq\big{\Vert} 
 f\big{\Vert}_{\mathcal A_n^*}$.
 We now define
 $$\pi_{mn}:C_{\text{BSE}}(\Delta(\mathcal A_m))\rightarrow
 C_{\text{BSE}}(\Delta(\mathcal A_n)),\;\;\;\;\sigma \mapsto \overline{\sigma}$$
by
$\overline{\sigma}(\psi):=\sigma(\psi \circ \rho_{mn})$ for all $\psi \in \Delta(\mathcal A_n)$.
To prove that $\pi_{mn}$ is well-defined, let $\sigma \in C_{\text{BSE}}(\Delta(\mathcal A_m))$.
We show that
$\overline{\sigma}\in C_{\text{BSE}}(\Delta(\mathcal A_n))$.
In accordance with the definition in \cite{17}, there exists a positive real number 
 $\beta$ such that for every finite number of complex-numbers 
 $c_{1},...,c_{k}$ and the same
 number of $\psi_{1},...,\psi_{k}$ in $\Delta(\mathcal A_m),$ the 
 inequality 
$$\big{\vert}\sum_{i=1}^{k} c_{i}\sigma(\psi_i)
\big{\vert} \leq \beta\; \big{\Vert} \sum_{i=1}^{k}c_{i}\psi_{i}\big{\Vert}_{\mathcal A_m^*}$$
holds. Therefore,
\begin{eqnarray*}
\big{\vert}\sum_{i=1}^{k} c_{i}\overline{\sigma}(\psi_i)\big{\vert}&=&\big{\vert} \sum_
{i=1}^{k} c_{i} \sigma(\psi_i \circ \rho_{mn})\big{\vert} \\
&\leq& \beta\; \big{\Vert} \sum_{i=1}^{k} c_{i}{(\psi_i o \rho_{mn})}
\big{\Vert}_{\mathcal A_m^*}\\
&=& \beta\; \big{\Vert} \sum_{i=1}^{k}c_{i}\overline{\psi_i}\big{\Vert}_{\mathcal A_m^*}\\ 
&=& \beta\; \big{\Vert}\big{(}\sum_{i=1}^{k}c_{i}\psi_i\overline{\big{)}}\big{\Vert}_{\mathcal A_m^*}\\
&\leq& \beta\; \big{\Vert} \sum_{i=1}^{k}c_{i} \psi_i \big{\Vert}_{\mathcal A_n^*}.
\end{eqnarray*}
Consequently, $\overline{\sigma} \in C_{\text{BSE}}(\Delta(\mathcal A_n))$.
Following \cite[II.5]{111}, we need to 
indicate that
\begin{enumerate}
\item[(i)]
for all $n \in \mathbb{N}$, $\pi_{nn}$ is the identity map on $C_{\text{BSE}}(\Delta(\mathcal A_n))$;
\item[(ii)]
$\pi_{mn}\circ\pi_{km}=\pi_{kn}$ for all $m,n,k\in\mathbb{N}$ with $n\leq m\leq k$.
\end{enumerate}
To prove (i), let $\sigma\in C_{\text{BSE}}(\Delta(\mathcal A_n))$.
Then, for each $\psi \in \Delta(\mathcal A_n)$, we have
$$\pi_{nn}(\sigma)(\psi)=\overline{\sigma}(\psi)=
\sigma(\psi \circ \rho_{nn})=\sigma(\psi).$$
Hence, $\pi_{nn}(\sigma)=\sigma$.
To prove (ii), consider $\sigma \in C_{\text{BSE}}(\Delta(\mathcal A_k))$.
Thus, 
$$\pi_{km}(\sigma)=\overline{\sigma}\in C_{\text{BSE}}(\Delta(\mathcal A_m)),$$
and so
for all $\psi \in \Delta(\mathcal A_n)$, we have
\begin{eqnarray*}
\pi_{mn}\circ\pi_{km}(\sigma)(\psi)&=&\pi_{mn}(\overline{\sigma})(\psi) \\
&=&\overline{\sigma}(\psi\circ\rho_{mn}) \\
&=&\sigma(\psi\circ\rho_{mn}\circ\rho_{km}) \\
&=&\sigma(\psi\circ\rho_{kn}).
\end{eqnarray*}
On the other hand, we have 
$$\pi_{kn}(\sigma)(\psi)=
\sigma(\psi \circ \rho_{kn}).$$
 Therefore, $\pi_{mn}\circ\pi_{km}
=\pi_{kn}$ for all $m,n,k\in\mathbb{N}$ with $n\leq m\leq k$.
Now, for each $n\in\mathbb{N}$, consider the map
$$
\rho_n:\mathcal A\rightarrow\mathcal A_n,\;\;\;\;a\mapsto a+ I_n.
$$
and define 
$$\pi_{n}:C_{\text{BSE}}(\Delta(\mathcal A)) \rightarrow
 C_{\text{BSE}}(\Delta(\mathcal A_n)),\;\;\;\;
\sigma \mapsto \widetilde{\sigma}_n$$
where $\widetilde{\sigma}_n(\psi)=\sigma(\psi\circ\rho_n)$ for all $\psi \in \Delta(\mathcal A_n)$.
For each $m,n\in\mathbb{N}$ with $n \leq m$, we have $\pi_{mn}\circ\pi_m=\pi_n$.
Indeed, for $\sigma\in C_{\text{BSE}}(\Delta(\mathcal A))$ and $\psi \in \Delta(\mathcal A_n)$ we have
\begin{eqnarray*}
\pi_{mn}\circ\pi_m(\sigma)(\psi)&=&
\pi_{mn}(\widetilde{\sigma}_m(\psi))\\
&=&\widetilde{\sigma}_m(\psi\circ\rho_{mn}) \\
&=&\sigma(\psi\circ\rho_{mn}\circ\rho_m) \\
&=&\sigma(\psi\circ\rho_n) \\
&=&\widetilde{\sigma}_n(\psi) \\
&=&\pi_n(\sigma)(\psi).
\end{eqnarray*}
Let $\mathcal{B}$ be a Fr\'echet algebra
and consider each mapping
$\varphi_n:\mathcal{B} \rightarrow C_{\text{BSE}}(\Delta(\mathcal A_n))$
such that 
$\pi_{mn}\circ\varphi_m=\varphi_n$
for $m,n\in\mathbb{N}$ with $n\leq m$.
In this case,
 there exists a unique mapping
$\varphi:\mathcal{B} \rightarrow C_{\text{BSE}}(\Delta(\mathcal A))$ such that
 $\pi_n\circ\varphi=\varphi_n$ for all $n \in \mathbb N$. 
Moreover, $\Delta(\mathcal{A})=\bigcup_{n=1}^{\infty}\Delta(\mathcal{A}_n)$
and for each $m,n\in\mathbb{N}$ with $n\leq m$ we have $\Delta(\mathcal{A}_n)\subseteq\Delta(\mathcal{A}_m)$; see \cite[Theorem 3.2.8]{7}.
Therefore, for each $\psi \in \Delta(\mathcal A)$, 
there exists an $n\in \mathbb N$ such that $\psi \in \Delta(\mathcal A_n)$.
By applying part (ii) of Remark \ref{2}, suppose 
$\Delta(\mathcal A_n)\rightarrow \Delta(\mathcal A)$, $\psi\mapsto\overline{\psi}$
such that
$\psi(a+I_n)=\overline{\psi}(a)$ for all $a\in \mathcal A$.
We now define
$\varphi(b)(\overline{\psi}):=\varphi_n(b)(\psi)$ for all $\psi \in
 \Delta(\mathcal A_n)$ and $b\in\mathcal{B}$.
 Since
 $$
 \psi\circ\rho_{n}(a)=\psi(a+I_n)=\overline{\psi}(a)\;\;\;\;\;\;\;(a\in\mathcal{A})
 $$
and
 $$\overline{\psi\circ\rho_{mn}}(a)=\psi\circ\rho_{mn}(a+I_m)=
\psi(a+I_n)=\overline{\psi}(a)\;\;\;\;\;\;\;(a\in\mathcal{A}),$$
we have
$$\overline{\psi}=\psi\circ\rho_{n}=\overline{\psi\circ\rho_{mn}},$$
for $m\geq n$. 
Hence, for $b\in\mathcal{B}$ and $m\geq n$, we have
\begin{eqnarray*}
\varphi_{n}(b)(\psi)&=&\pi_{mn}\circ\varphi_{m}(b)(\psi) \\
&=&\varphi_{m}(b)(\psi\circ\rho_{mn}) \\
&=&\varphi(b)(\overline{\psi\circ\rho_{mn}}) \\
&=&\varphi(b)(\psi\circ\rho_n) \\
&=&\pi_{n}\varphi(b)(\psi).
\end{eqnarray*}
Accordingly, $\varphi_{n}=\pi_{mn}\,o\,\varphi_{m}=\pi_{n}\,o\,\varphi$ and by definition, 
$$C_{\text{BSE}}(\Delta(\mathcal A))\cong
\underset{\longleftarrow}{lim}\, C_{\text{BSE}}(\Delta(\mathcal A_{n})).$$
\end{proof}

The following result is now immediate.
  
\begin{theorem}
Let $\mathcal A$ be a commutative semisimple unital Fr\'echet C$^{\ast}$-algebra. Then, 
\begin{enumerate}
\item[(i)] $\mathcal A$ is a \text{BSE}-algebra.

\item[(ii)] $\mathcal A
\cong \underset{\longleftarrow}{lim}  C_0(\Delta(\mathcal A_n)).$

\end{enumerate}
\end{theorem}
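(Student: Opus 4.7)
The plan is to reduce both parts to the classical Banach case level by level via the inverse limit decompositions provided by Remark \ref{2} and Lemma \ref{1}. For part (ii), each $\mathcal{A}_n$ is a commutative unital Banach C$^*$-algebra, so by the Gelfand--Naimark theorem $\mathcal{A}_n \cong C(\Delta(\mathcal{A}_n)) = C_0(\Delta(\mathcal{A}_n))$ (the spectrum is compact because $\mathcal{A}_n$ is unital). Combining these isomorphisms with Remark \ref{2}(i) yields
$$
\mathcal{A} \cong \underset{\longleftarrow}{lim}\, \mathcal{A}_n \cong \underset{\longleftarrow}{lim}\, C_0(\Delta(\mathcal{A}_n)),
$$
which is exactly (ii).

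For part (i), I would invoke the Takahasi--Hatori theorem from \cite{17} in its Banach form: every commutative unital C$^*$-algebra is a BSE-algebra, so for each $n$,
$$
C_{\text{BSE}}(\Delta(\mathcal{A}_n)) = \widehat{\mathsf{M}(\mathcal{A}_n)}.
$$
Applying Lemma \ref{1} on the left and Remark \ref{2}(iii) on the right, I would chain
$$
C_{\text{BSE}}(\Delta(\mathcal{A})) \cong \underset{\longleftarrow}{lim}\, C_{\text{BSE}}(\Delta(\mathcal{A}_n)) \cong \underset{\longleftarrow}{lim}\, \widehat{\mathsf{M}(\mathcal{A}_n)} \cong \widehat{\mathsf{M}(\mathcal{A})},
$$
which is precisely the condition that $\mathcal{A}$ is a BSE-Fréchet algebra.

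The main obstacle, and the only substantive step, is checking that these isomorphisms are natural with respect to the connecting maps $\rho_{mn}$ of the inverse system, so that the universal property of inverse limits actually applies. Concretely, I would verify that the Gelfand transforms $\mathcal{A}_n \to C(\Delta(\mathcal{A}_n))$ intertwine $\rho_{mn}$ with the pullback along the dual inclusion $\Delta(\mathcal{A}_n) \hookrightarrow \Delta(\mathcal{A}_m)$ arising from Remark \ref{2}(ii), and similarly that the identification $\widehat{\mathsf{M}(\mathcal{A}_n)} = C_{\text{BSE}}(\Delta(\mathcal{A}_n))$ as subalgebras of $C_b(\Delta(\mathcal{A}_n))$ is compatible with the maps $\pi_{mn}$ defined in the proof of Lemma \ref{1}. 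Both verifications are essentially a chase of definitions, since $\rho_{mn}$, its adjoint, and the Gelfand map all commute by construction; once compatibility is granted, the conclusion is indeed immediate, justifying the remark preceding the theorem.
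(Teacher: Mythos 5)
Your proposal is correct and follows essentially the same route as the paper: part (ii) is the levelwise Gelfand--Naimark isomorphism combined with Remark \ref{2}(i), and part (i) chains the Takahasi--Hatori theorem for each $\mathcal A_n$ through Lemma \ref{1} and Remark \ref{2}(iii), exactly as in the paper's proof. Your additional remark about verifying naturality of the isomorphisms with respect to the connecting maps is a point the paper leaves implicit, but it does not change the argument.
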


\begin{proof}
(i) Since $\mathcal A\cong\underset{\longleftarrow}{lim} \mathcal A_n$ and 
${\mathcal A}_n$ is a commutative Banach $C^{\ast}$-algebra, by using \cite[Theorem 3]{17}, it
 is a \text{BSE}-algebra and is semisimple. Thus, 
$$\mathcal M({\mathcal A}_n)=\widehat{\mathsf{M}({\mathcal A}_n)}=C_{\text{BSE}}(\Delta({\mathcal A}_n)).$$
Therefore, by applying Remark \ref{2} and Lemma \ref{1}, we have
$$\mathcal M({\mathcal A})\cong\underset{\longleftarrow}{lim} \mathcal{M}(\mathcal A_{n})\cong
\underset{\longleftarrow}{lim} C_{\text{BSE}}(\Delta({\mathcal A}_n))\cong
C_{\text{BSE}}(\Delta(\mathcal A)).$$ 
Indeed, $\mathcal A$ is semisimple. Hence, $\widehat {\mathsf{M}(\mathcal A)}=C_{\text{BSE}}(\Delta(\mathcal A))$ and $\mathcal A$ is a $\text{BSE}$-algebra.

(ii) Let each $\mathcal A_n$ be a $C^{\ast}$-Banach algebra. Then for each $n$, $\mathcal A_n \cong C_0(\Delta(\mathcal A_n))$
 by Gelfand theorem \cite[ Theorem 3] {17}. Consequently, 
$$\underset{\longleftarrow}{lim} \mathcal A_n\cong\underset{\longleftarrow}{lim}
C_0(\Delta(\mathcal A_n)).$$ Thus, 
$\mathcal A \cong \underset{\longleftarrow}{lim}
C_0(\Delta(\mathcal A_n))$.
\end{proof}

The Banach algebra $(\mathcal B,\Vert . \Vert)$ is called a unifom Banach 
algebra if $\Vert a\Vert^2=\Vert a^2 \Vert$ for all $a\in \mathcal B.$
 In this case, we have 
 $$\Vert a \Vert=\Vert \widehat{a} \Vert_{\Delta(\mathcal B)}=
 sup\lbrace \vert \widehat{a}(\varphi)\vert:\varphi\in \Delta(\mathcal B)\rbrace$$ 
 for all $a\in \mathcal B$. Furthermore,
$\Gamma: \mathcal B \rightarrow \widehat{\mathcal B}$, $a \mapsto \widehat{a}$ 
is a topological homomorphism and so $(\widehat{\mathcal B},\Vert . \Vert_{\Delta(\mathcal B)})$ 
is a uniform Banach algebra; see \cite[4.1.1]{7}. In addition, 
each uniform Banach algebra is topologically and
 algebraically isomorphic to a closed point separating subalgebra of $C(K)$ 
 for some compact nonempty Hausdorff space $K$. 
Uniform Fr\'echet algerbras are introduced similar to the Banach case.
Indeed, the Fr\'echet algebra
 $(\mathcal A,p_{n})_{n\in\mathbb{N}}$ is called uniform Fr\'echet algerbra if $p_n(a^2)=(p_n(a))^2$ 
 for all $a\in \mathcal A$ and $n\in \mathbb{N}$; see \cite[Definition 4.1.2]{7}.

\begin{lemma}\label{3}
Let $(\mathcal A,p_n)_{n\in\mathbb{N}}$ be a commutative semisimple uniform Fr\'echet algebra.
 Then, there exists a sequence $(\mathcal A_n)$ of uniform Banach algebras such that
\begin{enumerate}
\item[(i)] $\mathcal A\cong\underset{\longleftarrow}{lim} \mathcal A_n;$

\item[(ii)] $\Delta(\mathcal A)\cong\underset{\longrightarrow}{lim}
\Delta(\mathcal A_n);$

\item[(ii)] $C_{\text{BSE}}(\Delta(\mathcal A))\cong
\underset{\longleftarrow}{lim}C_{\text{BSE}}(\Delta(\mathcal A_{n}));$

\item[(iv)] $\mathcal{M}(\mathcal A)\cong
\underset{\longleftarrow}{lim}{\mathcal{M}(\mathcal A_{n})}.$ 
\end{enumerate}
\end{lemma}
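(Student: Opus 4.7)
The plan is to mirror the strategy of Remark \ref{2} and Lemma \ref{1}, with the $C^{\ast}$-condition replaced by the uniform identity $p_n(a^2)=p_n(a)^2$. For each $n\in\mathbb{N}$, I set $I_n=\{a\in\mathcal A:p_n(a)=0\}$ and let $\mathcal A_n$ denote the Banach algebra completion of the normed quotient $(\mathcal A/I_n,\,\|a+I_n\|=p_n(a))$. Because the uniform identity passes to the quotient norm and then extends by continuity of the squaring map to the completion, each $\mathcal A_n$ is a uniform Banach algebra, and is in particular commutative and semisimple. Setting $\rho_{mn}:\mathcal A_m\to\mathcal A_n,\ a+I_m\mapsto a+I_n$ for $n\leq m$ gives a contractive inverse system, and the standard Arens--Michael decomposition (see \cite{111}) yields (i): $\mathcal A\cong\underset{\longleftarrow}{lim}\mathcal A_n$. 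Part (ii) is immediate from \cite[Theorem 3.2.8]{7}, the same fact already used inside the proof of Lemma \ref{1}, realizing $\Delta(\mathcal A)$ as $\bigcup_n \Delta(\mathcal A_n)$ with the inclusions induced by $\rho_n$.

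For (iii), I would observe that the proof of Lemma \ref{1} nowhere invokes the $C^{\ast}$-property: the construction of $\pi_{mn}$ via $\overline{\sigma}(\psi):=\sigma(\psi\circ\rho_{mn})$, the BSE-inequality check through the adjoint $\rho_{mn}^{\ast}$, and the verification of the universal property of the inverse limit all use only the quotient structure and the contraction $\|\rho_{mn}\|\leq 1$. Consequently the argument applies \emph{verbatim} to the new sequence $(\mathcal A_n)$ of uniform Banach algebras, producing $C_{\text{BSE}}(\Delta(\mathcal A))\cong\underset{\longleftarrow}{lim}C_{\text{BSE}}(\Delta(\mathcal A_n))$ by the same construction of $\pi_n:C_{\text{BSE}}(\Delta(\mathcal A))\to C_{\text{BSE}}(\Delta(\mathcal A_n))$ and its universal factorisation.

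Part (iv) is the main work. Using semisimplicity of $\mathcal A$ and of each $\mathcal A_n$, I identify $\mathcal{M}(\mathcal A_n)$ with the set of continuous bounded functions $\Phi:\Delta(\mathcal A_n)\to\mathbb{C}$ satisfying $\Phi\cdot\widehat{\mathcal A_n}\subseteq\widehat{\mathcal A_n}$, and similarly for $\mathcal{M}(\mathcal A)$. Precomposition with the inclusion $\rho_{mn}^{\ast}:\Delta(\mathcal A_n)\hookrightarrow\Delta(\mathcal A_m)$ gives restriction maps $\mathcal{M}(\mathcal A_m)\to\mathcal{M}(\mathcal A_n)$ assembling into an inverse system, and any $\Phi\in\mathcal{M}(\mathcal A)$ restricts coherently, producing a canonical map $\mathcal{M}(\mathcal A)\to\underset{\longleftarrow}{lim}\mathcal{M}(\mathcal A_n)$. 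Conversely, given a coherent family $(\Phi_n)$, I would define $\Phi$ on $\Delta(\mathcal A)=\bigcup_n\Delta(\mathcal A_n)$ by $\Phi|_{\Delta(\mathcal A_n)}:=\Phi_n$; coherence makes this well-defined, and the multiplier condition $\Phi\cdot\widehat{\mathcal A}\subseteq\widehat{\mathcal A}$ reduces layer by layer to the corresponding condition on each $\mathcal A_n$, using density of $\rho_n(\mathcal A)$ in $\mathcal A_n$. The subtle point I expect to be the main obstacle is upgrading continuity: continuity of $\Phi_n$ on $\Delta(\mathcal A_n)$ in the weak-$\ast$ topology inherited from $\mathcal A_n^{\ast}$ must be promoted to continuity of $\Phi$ on $\Delta(\mathcal A)$ in the weak-$\ast$ topology from $\mathcal A^{\ast}$. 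This should follow from the fact, provided by (ii), that each $\Delta(\mathcal A_n)\hookrightarrow\Delta(\mathcal A)$ is a homeomorphism onto its image and that $\Delta(\mathcal A)$ carries the corresponding inductive-limit topology, but it is the only step that goes beyond a direct transcription of Lemma \ref{1}.
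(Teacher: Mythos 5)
Your proposal takes essentially the same route as the paper: the paper's own proof of this lemma simply cites \cite[Theorem 4.1.3]{7} and \cite[Theorem 3.2.8]{7} for (i) and (ii), and then asserts that (iii) and (iv) follow ``by similar arguments as used in Lemma \ref{1} and Remark \ref{2}.'' Your write-up is a faithful and considerably more explicit expansion of exactly that plan --- including the key observation that the proof of Lemma \ref{1} never uses the $C^{\ast}$-identity, and a more careful treatment of the density/continuity issues in (iv) than the paper itself offers.
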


\begin{proof}
It is evident that (i) and (ii) are valid by applying 
\cite[Theorem 4.1.3]{7} and \cite[Theorem 3.2.8]{7}, respectively.  
Moreover, by a similar arguments as used in Lemma
 \ref{1} and Remark \ref{2}, (iii) and (iv) are established, respectively.
\end{proof}

\begin{theorem}
Any commutative semisimple uniform Fr\'echet algebra is a \text{BSE}-algebra.
\end{theorem}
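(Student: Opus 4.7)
The plan is to mirror the proof of the preceding theorem on Fr\'echet C$^*$-algebras, with Lemma \ref{3} playing the role that Remark \ref{2} and Lemma \ref{1} played there. Fix a commutative semisimple uniform Fr\'echet algebra $(\mathcal{A},p_n)_{n\in\mathbb{N}}$ and let $(\mathcal{A}_n)$ be the sequence of commutative uniform Banach algebras supplied by Lemma \ref{3}, so that $\mathcal{A}\cong\underset{\longleftarrow}{lim}\,\mathcal{A}_n$.

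First I would handle the Banach-algebra layer. Each $\mathcal{A}_n$ is automatically semisimple, since on a uniform Banach algebra the Gelfand map $a\mapsto\widehat{a}$ is an isometry onto a closed point-separating subalgebra of $C(\Delta(\mathcal{A}_n))$, and hence injective. Combining this with the Takahasi--Hatori BSE-characterization in the uniform Banach setting \cite{17,18} gives
$$
\widehat{\mathsf{M}(\mathcal{A}_n)}=\mathcal{M}(\mathcal{A}_n)=C_{\text{BSE}}(\Delta(\mathcal{A}_n))
$$
for every $n\in\mathbb{N}$.

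Next I would pass to the inverse limit using parts (iii) and (iv) of Lemma \ref{3}:
\begin{eqnarray*}
\mathcal{M}(\mathcal{A})&\cong&\underset{\longleftarrow}{lim}\,\mathcal{M}(\mathcal{A}_n)\\
&\cong&\underset{\longleftarrow}{lim}\,C_{\text{BSE}}(\Delta(\mathcal{A}_n))\\
&\cong&C_{\text{BSE}}(\Delta(\mathcal{A})).
\end{eqnarray*}
Because $\mathcal{A}$ is semisimple, $\widehat{\mathsf{M}(\mathcal{A})}=\mathcal{M}(\mathcal{A})$, and therefore $\widehat{\mathsf{M}(\mathcal{A})}\cong C_{\text{BSE}}(\Delta(\mathcal{A}))$, which is exactly the definition of a $\text{BSE}$-Fr\'echet algebra.

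The substantive difficulty has already been absorbed into Lemma \ref{3}: one needs the three isomorphisms above to be induced by a single compatible family of connecting maps, so that their composition is a bona fide Gelfand-preserving isomorphism rather than an abstract coincidence of inverse limits. This compatibility is guaranteed by the universal-property construction carried out in the proof of Lemma \ref{1} (and reused for Lemma \ref{3}(iii)), and by the analogous construction for $\mathcal{M}(\mathcal{A})$ cited from \cite[Proposition 5.9]{15}. Hence no verification beyond a reference to Lemma \ref{3} is needed, and the theorem is immediate. The main obstacle, as in the C$^*$-algebra case, lies in establishing the Banach-level BSE statement, which is a known result, and in the inverse-limit compatibility already packaged in Lemma \ref{3}.
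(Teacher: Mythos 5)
Your proposal is correct and follows essentially the same route as the paper: decompose $\mathcal{A}$ as an inverse limit of uniform Banach algebras via Lemma \ref{3}, invoke the known Banach-level fact that uniform Banach algebras are BSE-algebras, and pass to the inverse limit using parts (iii) and (iv) of that lemma. The only cosmetic difference is the citation for the Banach-level step --- the paper attributes it to Kaniuth and \"Ulger \cite[Theorem 2.6]{12} rather than to Takahasi--Hatori --- and your added remarks on the semisimplicity of each $\mathcal{A}_n$ and on the compatibility of the connecting maps are reasonable elaborations of points the paper leaves implicit.
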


\begin{proof}
Let $\mathcal{A}$ be a commutative semisimple uniform Fr\'echet algebra.
Then, $\mathcal A=\underset{\longleftarrow}{lim} \mathcal A_n$
where each $\mathcal A_n$
is a uniform Banach algebra.
By \cite[Theorem 2.6]{12}, 
each uniform Banach algebra is a
\text{BSE}-algebra. Hence, 
$$C_{\text{BSE}}(\Delta(\mathcal A_n))=\widehat{\mathsf{M}(\mathcal A_n)}=\mathcal M(\mathcal A_n)
$$
 for each $n \in \mathbb N$. Therefore, by Lemma \ref{3}, we have
 $$
C_{\text{BSE}}(\Delta(\mathcal A))=\mathcal M(\mathcal A),
$$
and so $\mathcal A$ is a \text{BSE}-algebra.
\end{proof}

\noindent {\bf Acknowledgment.} This research was partially supported by 
the center of excellence for mathematics at the University of Isfahan. 

\bigskip

\vspace{9mm}

{\footnotesize \noindent

\noindent
M. Amiri\\
Department of Pure Mathematics,
University of Isfahan,
Isfahan, Iran\\
mitra.amiri@sci.ui.ac.ir\\ 
mitra75amiri@gmail.com\\

\noindent
A. Rejali\\
Department of Pure Mathematics,
University of Isfahan,
Isfahan, Iran\\
rejali@sci.ui.ac.ir\\

\end{document}